\pdfoutput=1
\documentclass[11pt]{amsart}
\usepackage{amsmath,amssymb,amsthm}
\usepackage[margin=1.1in]{geometry}
\usepackage{hyperref}
\usepackage{booktabs}

\newtheorem{theorem}{Theorem}
\newtheorem{lemma}[theorem]{Lemma}
\newtheorem{proposition}[theorem]{Proposition}
\newtheorem{corollary}[theorem]{Corollary}
\theoremstyle{definition}

\newtheorem{conjecture}[theorem]{Conjecture}
\theoremstyle{remark}
\newtheorem{remark}[theorem]{Remark}

\newcommand{\fl}[1]{\left\lfloor #1\right\rfloor}
\newcommand{\fr}[1]{\left\{ #1\right\}}
\newcommand{\dist}[1]{\left\| #1\right\|}
\newcommand{\N}{\mathbb{N}}
\newcommand{\rows}{\mathcal{M}}

\title[Kimberling's array $\lfloor k\varphi^n\rfloor$]{On three conjectures of Kimberling\\ concerning the array $\lfloor k\varphi^n\rfloor$}

\author{Alex Ashburn}
\address{Independent Researcher}
\email{aashburn1998@gmail.com}

\subjclass[2020]{11B83, 11B39, 11J71}
\keywords{Beatty sequence, golden ratio, Fibonacci numbers, Lucas numbers, Skolem--Bang theorem, OEIS}

\begin{document}

\begin{abstract}
Let $\varphi$ be the golden ratio and let $R_n=\{\lfloor k\varphi^n\rfloor : k\ge 1\}$ be the $n$-th row of the array $T(n,k)=\lfloor k\varphi^n\rfloor$ (OEIS A128440).
In 2022 Kimberling conjectured that the rows $R_{2n-1}$ and $R_{2n}$ are disjoint, and that after the two rows are merged and each entry is replaced by its rank, they become the lower and upper Wythoff sequences.
He also conjectured (OEIS A358359) that if $a(N)$ is the number of rows containing $N$, then every positive integer occurs infinitely often among the values of $a$.
We show that the first two conjectures follow quickly from the Skolem--Bang theorem, which also yields the exact rule for when two rows are disjoint: $R_i\cap R_j=\emptyset$ ($i<j$) if and only if $j-i$ is odd and divides $i$.
We then prove the third conjecture.
The main tools are an explicit determination of the rows containing an odd-indexed Lucas number, which extends a result of Noppakaew, Kanwarunyu and Wanitchatchawan, and a ``Lucas shift'' lemma: if $N+1$ is not of the form $L_{2e}$ with $e\ge1$, then adding a sufficiently large even-indexed Lucas number to $N$ does not change the set of rows containing it.
We also show that each value of $a$ is taken on a set of positive natural density, and we report computations up to $10^8$ suggesting that the least $N$ lying in exactly $v\ge 2$ rows is the Lucas number $L_{4v-5}$.
\end{abstract}

\maketitle

\section{Introduction}

Throughout, $\varphi=(1+\sqrt5)/2$, $\psi=-1/\varphi=(1-\sqrt5)/2$, and $F_n$, $L_n$ denote the Fibonacci and Lucas numbers ($F_0=0$, $F_1=1$, $L_0=2$, $L_1=1$).
For a real $x$ we write $\fr{x}=x-\fl{x}$ and $\dist{x}$ for the distance from $x$ to the nearest integer.
For irrational $\alpha>1$ let $B(\alpha)=\{\fl{k\alpha}:k\ge1\}$ be the Beatty sequence of $\alpha$.

Kimberling's array A128440 of the OEIS~\cite{oeis} is
\[
T(n,k)=\fl{k\varphi^n}\qquad(n,k\ge1),
\]
so its $n$-th row is $R_n:=B(\varphi^n)$.
Rows $1$ and $2$ are the lower and upper Wythoff sequences A000201 and A001950, which partition $\N=\{1,2,3,\dots\}$ by Beatty's theorem~\cite{beatty}, since $\varphi^{-1}+\varphi^{-2}=1$.
In November 2022 Kimberling recorded the following two conjectures in A128440.

\begin{conjecture}[Kimberling]\label{conj:disjoint}
For every $n\ge1$, the rows $R_{2n-1}$ and $R_{2n}$ are disjoint.
\end{conjecture}

\begin{conjecture}[Kimberling, stronger form]\label{conj:strong}
For every $n\ge1$, if the numbers in rows $2n-1$ and $2n$ are arranged together in increasing order and each number is replaced by its position in this ordering, then the two resulting rows are rows $1$ and $2$.
\end{conjecture}

In the companion entry A358359 he defined
\[
a(N)=\#\{m\ge1: N\in R_m\},
\]
the number of occurrences of $N$ in A128440, and conjectured the following.

\begin{conjecture}[Kimberling]\label{conj:occ}
Every positive integer occurs infinitely many times in the sequence $(a(N))_{N\ge1}$.
\end{conjecture}

The OEIS entries (as of October 2026) list these statements as conjectures. We could not find proofs of them stated in the literature, although Conjecture~\ref{conj:disjoint} follows at once from a theorem of Russo and Schwiebert~\cite{russo} (see below).
In August and September 2026 Kimberling added analogous arrays $\fl{kx^n}$ for the constants $x=1+\sqrt2$, $1+\sqrt3$, $\sqrt5-1$, $2+\sqrt2$, $e$ and $\pi$ (A399246, A399248, A399252, A399254, A399256, A399258), together with companion $0/1$ arrays recording which entries lie in the first row (see the guide in A399245), so the topic is of current interest.

Our results are as follows.
In Section~\ref{sec:disjoint} we observe that Conjectures~\ref{conj:disjoint} and~\ref{conj:strong} follow quickly from the classical Skolem--Bang theorem~\cite{skolem,bang}, and we obtain the complete rule for disjointness (Theorem~\ref{thm:pairs}):
\[
R_i\cap R_j=\emptyset\ (i<j)\iff j-i \text{ is odd and } (j-i)\mid i.
\]
In particular \emph{all} pairs of consecutive rows are disjoint, not just the pairs $(2n-1,2n)$, and the stronger rank statement also holds for all consecutive pairs (Corollary~\ref{cor:kimberling}).
We claim no novelty for this part beyond pointing out the connection and working out the arithmetic; in particular, as noted above, Conjecture~\ref{conj:disjoint} follows at once from~\cite{russo} (see the proof of Corollary~\ref{cor:kimberling}).
The same argument applies to $\fl{kx^n}$ whenever $x>1$ is a quadratic unit of norm $-1$, for example $x=1+\sqrt2$ (Theorem~\ref{thm:units}).

Section~\ref{sec:occ} contains the main new result, a proof of Conjecture~\ref{conj:occ} (Theorem~\ref{thm:occ}); one of its tools, the description of the rows containing an odd-indexed Lucas number (Proposition~\ref{prop:lucas}), extends a result of~\cite{nkw}.
Section~\ref{sec:numerics} reports computations, shows that each value of $a$ is taken on a set of positive natural density (Proposition~\ref{prop:density}), and states an open conjecture on first occurrences.

\section{Preliminaries}

We use the following standard facts (see e.g.~\cite{vajda}).
For $n\ge1$,
\begin{align}
\varphi^n&=F_n\varphi+F_{n-1}, \label{eq:phipow}\\
\varphi^n+\psi^n&=L_n, \label{eq:lucas}\\
\varphi^{-n}&=(-1)^n\,(F_{n+1}-F_n\varphi), \label{eq:phineg}
\end{align}
where~\eqref{eq:phineg} follows from $\varphi^{-n}=(-\psi)^n$ and $\psi^n=F_n\psi+F_{n-1}=F_{n+1}-F_n\varphi$.
From~\eqref{eq:lucas}, since $\psi^n=(-1)^n\varphi^{-n}$,
\begin{equation}\label{eq:lucas2}
\varphi^{n}=L_n-(-1)^n\varphi^{-n}\qquad(n\ge0).
\end{equation}
We also use d'Ocagne's identity $F_mF_{n+1}-F_{m+1}F_n=(-1)^nF_{m-n}$, the addition formula $F_{m+n}=F_mF_{n+1}+F_{m-1}F_n$, and the strong divisibility property $\gcd(F_m,F_n)=F_{\gcd(m,n)}$.

\begin{lemma}[Membership criterion]\label{lem:member}
Let $\alpha>1$ be irrational and $N\ge1$ an integer. Then $N\in B(\alpha)$ if and only if $\fr{N/\alpha}>1-1/\alpha$.
Moreover, if $\alpha\ge N+1$ then $N\notin B(\alpha)$.
\end{lemma}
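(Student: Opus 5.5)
The plan is to rewrite the condition $N\in B(\alpha)$ as the existence of an integer $k\ge1$ in a certain open interval, and then read off the fractional-part condition. Since $\alpha$ is irrational, $k\alpha$ is never an integer for $k\ge1$. Hence $\fl{k\alpha}=N$ holds if and only if $N<k\alpha<N+1$. Dividing by $\alpha$, this says
\[
\frac{N}{\alpha}<k<\frac{N+1}{\alpha}.
\]
So $N\in B(\alpha)$ if and only if the open interval $I=(N/\alpha,(N+1)/\alpha)$ contains an integer $k\ge1$. Because $N\ge1$, the left endpoint $N/\alpha$ is positive, so any integer in $I$ is automatically at least $1$, and the positivity requirement on $k$ may be dropped.

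Next I will identify which integer could lie in $I$. The interval has length $1/\alpha<1$, so it contains at most one integer. The only candidate is $k=\fl{N/\alpha}+1$, because $N/\alpha$ is irrational (here $N\ge1$ and $\alpha$ is irrational), so it is not itself an integer. This $k$ exceeds $N/\alpha$ automatically. It lies in $I$ exactly when $\fl{N/\alpha}+1<N/\alpha+1/\alpha$, that is, when $\fr{N/\alpha}>1-1/\alpha$. Equality cannot occur, since it would force $(N+1)/\alpha$ to be an integer, contradicting irrationality. This proves the first claim.

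For the second claim, suppose $\alpha\ge N+1$. Then $(N+1)/\alpha\le1$, so $I\subset(0,1]$ as a set. Since $I$ is open on the right, it contains no positive integer, and therefore $N\notin B(\alpha)$. Equivalently, $\fl{k\alpha}\ge\fl{\alpha}\ge N+1$ for every $k\ge1$.

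I expect no real obstacle here. The only care needed is the bookkeeping at the endpoints: irrationality rules out every boundary equality, and this is what makes the inequality strict.
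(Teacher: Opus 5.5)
Your proof is correct and takes the same route as the paper. You reduce membership to finding an integer in the open interval $(N/\alpha,(N+1)/\alpha)$ of length $1/\alpha<1$, test the least integer above $N/\alpha$ to get $\fr{N/\alpha}>1-1/\alpha$, and for the second claim use $\fl{k\alpha}\ge\fl{\alpha}\ge N+1$.
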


\begin{proof}
$N=\fl{k\alpha}$ if and only if $N/\alpha<k<(N+1)/\alpha$.
The open interval $(N/\alpha,(N+1)/\alpha)$ has length $1/\alpha<1$ and irrational endpoints, so it contains an integer exactly when the least integer above $N/\alpha$, which is $N/\alpha+1-\fr{N/\alpha}$, is less than $(N+1)/\alpha$, that is, when $\fr{N/\alpha}>1-1/\alpha$.
If $\alpha\ge N+1$ then $\fl{k\alpha}\ge\fl{\alpha}\ge N+1$ for all $k\ge1$.
\end{proof}

\begin{lemma}\label{lem:dioph}
For every integer $q\ge1$ we have $\dist{q\varphi}>1/(3q)$.
Consequently, for integers $K\ge1$ and $i\ge1$, $\dist{K\varphi^{-i}}>1/(3KF_i)$.
\end{lemma}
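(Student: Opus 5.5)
The first bound comes from the fact that the norm form $p^2-pq-q^2$ never vanishes on nonzero integer pairs. The second bound then follows by applying the first to $q=KF_i$, using~\eqref{eq:phineg}.

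For the first claim, fix $q\ge1$ and let $p$ be the integer nearest to $q\varphi$, so that $\delta:=\dist{q\varphi}=|q\varphi-p|\le 1/2$. Since $\varphi\psi=-1$ and $\varphi+\psi=1$, we have
\[
(p-q\varphi)(p-q\psi)=p^2-pq-q^2 .
\]
This is an integer. It is nonzero, because otherwise $p/q$ would be a root of $x^2-x-1$, which is impossible since $\varphi,\psi$ are irrational. Hence $|p-q\varphi|\,|p-q\psi|\ge1$.

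Next we bound the second factor. Writing $p-q\psi=(p-q\varphi)+q(\varphi-\psi)$ and using $\varphi-\psi=\sqrt5$ gives
\[
|p-q\psi|\le \delta+q\sqrt5\le \tfrac12+q\sqrt5 .
\]
Therefore $\delta\ge 1/(q\sqrt5+\tfrac12)$. Since $\sqrt5+\tfrac12<3$, we have $q\sqrt5+\tfrac12<3q$ for every $q\ge1$, and so $\delta>1/(3q)$.

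For the consequence, \eqref{eq:phineg} gives
\[
K\varphi^{-i}=(-1)^i\bigl(KF_{i+1}-KF_i\varphi\bigr).
\]
Here $KF_{i+1}$ is an integer. The distance to the nearest integer is invariant under sign changes and under integer shifts, so $\dist{K\varphi^{-i}}=\dist{KF_i\varphi}$. For $i\ge1$ we have $F_i\ge1$, so $q=KF_i$ is a positive integer. The first part then yields $\dist{K\varphi^{-i}}>1/(3KF_i)$.

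The only point needing care is that the norm $p^2-pq-q^2$ is nonzero; everything else is elementary estimation with room to spare.
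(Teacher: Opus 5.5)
Your proof is correct and follows essentially the same route as the paper. The nonvanishing norm $p^2-pq-q^2$ gives $\dist{q\varphi}\ge 1/|p-q\psi|$, and the conjugate factor is bounded by $q\sqrt5+\tfrac12<3q$ (you bound it as $\delta+q\sqrt5$, the paper as $p+q/\varphi$, which is the same estimate); the second claim then follows from \eqref{eq:phineg} with $q=KF_i$.
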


\begin{proof}
Let $p$ be the integer nearest to $q\varphi$, so $p\ge1$.
Then $(p-q\varphi)(p-q\psi)=p^2-pq-q^2$ is an integer, and it is nonzero because $p/q$ is rational while $\varphi,\psi$ are not.
Hence $\dist{q\varphi}=|p-q\varphi|\ge 1/|p-q\psi|=1/(p+q/\varphi)$.
Since $p\le q\varphi+\tfrac12$, we get $p+q/\varphi\le q(\varphi+\varphi^{-1})+\tfrac12=q\sqrt5+\tfrac12<3q$.
For the second statement, by~\eqref{eq:phineg} we have $K\varphi^{-i}\equiv \pm KF_i\varphi\pmod 1$, so $\dist{K\varphi^{-i}}=\dist{KF_i\varphi}>1/(3KF_i)$.
\end{proof}

By Lemma~\ref{lem:member} with $\alpha=\varphi^m$,
\begin{equation}\label{eq:rowcrit}
N\in R_m\iff \fr{N\varphi^{-m}}>1-\varphi^{-m},
\end{equation}
and $N\notin R_m$ whenever $\varphi^m\ge N+1$.
In particular $a(N)$ is finite for every $N$.

\section{Disjointness of rows}\label{sec:disjoint}

\begin{theorem}[Skolem--Bang~\cite{skolem,bang}; see also~\cite{ayat}]\label{thm:SB}
Let $\alpha,\beta>1$ be irrational. Then $B(\alpha)\cap B(\beta)=\emptyset$ if and only if there are positive integers $r,s$ with $r/\alpha+s/\beta=1$.
\end{theorem}

The ``if'' direction is elementary.
If $r/\alpha+s/\beta=1$, then $B(\alpha/r)$ and $B(\beta/s)$ are complementary by Beatty's theorem.
Since $\fl{k\alpha}=\fl{(kr)(\alpha/r)}$, we have $B(\alpha)\subseteq B(\alpha/r)$, and likewise $B(\beta)\subseteq B(\beta/s)$.
The ``only if'' direction is deeper. Both directions are in Skolem's paper~\cite{skolem} (see~\cite[p.~26]{fraenkel}). Bang~\cite{bang} proved the companion criterion that, for irrational $\gamma,\beta>1$, $B(\beta)\subseteq B(\gamma)$ if and only if $r(1-1/\gamma)+s/\beta=1$ for some positive integers $r,s$. Applied with $\gamma=\alpha/(\alpha-1)$, whose Beatty sequence is the complement of $B(\alpha)$ by Beatty's theorem, it gives the ``only if'' direction again, since $1-1/\gamma=1/\alpha$.

\begin{theorem}\label{thm:pairs}
Let $1\le i<j$ and $d=j-i$. Then $R_i\cap R_j=\emptyset$ if and only if $d$ is odd and $d\mid i$.
\end{theorem}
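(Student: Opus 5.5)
By Theorem~\ref{thm:SB} with $\alpha=\varphi^i$ and $\beta=\varphi^j$ (both irrational and $>1$), $R_i\cap R_j=\emptyset$ if and only if there are positive integers $r,s$ with $r\varphi^{-i}+s\varphi^{-j}=1$. The plan is to turn this into a condition on Fibonacci numbers. Multiplying by $\varphi^j$, the condition becomes $r\varphi^d+s=\varphi^j$. By \eqref{eq:phipow} this reads
\[
rF_d\varphi+rF_{d-1}+s=F_j\varphi+F_{j-1}.
\]
Since $1,\varphi$ are linearly independent over $\mathbb{Q}$, this is equivalent to the system
\[
rF_d=F_j,\qquad s=F_{j-1}-rF_{d-1}.
\]
So disjointness holds if and only if $F_d\mid F_j$ and the resulting integer $s=F_{j-1}-(F_j/F_d)F_{d-1}$ is positive. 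Once $r$ is an integer, $s$ is automatically an integer, and $r>0$ is automatic.

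Next I will compute the sign of $s$ using d'Ocagne's identity with $m=j-1$ and $n=d-1$. This gives
\[
F_dF_{j-1}-F_jF_{d-1}=(-1)^{d-1}F_{j-d}=(-1)^{d-1}F_i .
\]
Hence $F_ds=(-1)^{d-1}F_i$. Since $i\ge1$ gives $F_i>0$, we get $s>0$ if and only if $d$ is odd. So the criterion reduces to: $d$ is odd and $F_d\mid F_j$.

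Finally I translate $F_d\mid F_j$ into $d\mid i$. If $d=1$, then $F_1=1$ divides everything, and also $1\mid i$, so the two conditions agree. If $d\ge3$ is odd, strong divisibility gives $F_d\mid F_j$ iff $F_{\gcd(d,j)}=F_d$. Because the Fibonacci numbers are strictly increasing from index $2$ onward and $d\ge3$, this holds iff $\gcd(d,j)=d$, i.e.\ $d\mid j$. Since $j=i+d$, this is equivalent to $d\mid i$. The even case needs no divisibility analysis, since $s<0$ already excludes it; for instance $d=2$ gives $s=-F_i$.

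The only delicate point is the small-index exception in strong divisibility ($F_1=F_2=1$), which is why I treat $d=1$ separately and restrict the strong-divisibility step to $d\ge3$. Otherwise the argument is a direct computation once Theorem~\ref{thm:SB} is invoked.
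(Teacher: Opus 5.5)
Your proof is correct and follows the paper's argument: Skolem--Bang, linear independence of $1,\varphi$, d'Ocagne's identity giving $F_d s=(-1)^{d-1}F_i$, then strong divisibility with $d=1$ handled separately. The only difference is cosmetic: the paper reduces integrality to $F_d\mid F_i$ and uses the addition formula to recover $F_d\mid F_j$, whereas you keep $F_d\mid F_j$ and pass to $d\mid i$ via $d\mid j\iff d\mid i$, which avoids the addition-formula step.
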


\begin{proof}
By Theorem~\ref{thm:SB}, $R_i\cap R_j=\emptyset$ if and only if $r\varphi^{-i}+s\varphi^{-j}=1$ for some positive integers $r,s$, that is, $r\varphi^{d}+s=\varphi^{j}$.
By~\eqref{eq:phipow} and the linear independence of $1,\varphi$ over $\mathbb{Q}$, this is equivalent to
\[
rF_d=F_j,\qquad rF_{d-1}+s=F_{j-1}.
\]
Thus $r=F_j/F_d$, and by d'Ocagne's identity with $(m,n)=(j-1,d-1)$,
\[
s=\frac{F_dF_{j-1}-F_jF_{d-1}}{F_d}=\frac{(-1)^{d-1}F_{j-d}}{F_d}=(-1)^{d-1}\frac{F_i}{F_d}.
\]
So positive integers $r,s$ exist if and only if $F_d\mid F_j$, $F_d\mid F_i$, and $d$ is odd.
Since $F_j=F_{i+d}=F_iF_{d+1}+F_{i-1}F_d$, the condition $F_d\mid F_i$ already implies $F_d\mid F_j$.
Finally, for odd $d$ we have $F_d\mid F_i\iff d\mid i$.
This is trivial for $d=1$.
For $d\ge3$, strong divisibility gives $F_d\mid F_i\iff F_{\gcd(d,i)}=F_d$, and since $F_g<F_d$ for $1\le g<d$ when $d\ge3$, this holds if and only if $\gcd(d,i)=d$.
\end{proof}

For example, $R_3\cap R_6=\emptyset$, while $13=\fl{5\varphi^2}=\fl{2\varphi^4}\in R_2\cap R_4$.

\begin{corollary}\label{cor:kimberling}
For every $m\ge1$:
\begin{enumerate}
\item $R_m\cap R_{m+1}=\emptyset$. In particular Conjecture~\ref{conj:disjoint} holds.
\item For all $k,j\ge1$, $\fl{k\varphi^m}<\fl{j\varphi^{m+1}}$ if and only if $\fl{k\varphi}<\fl{j\varphi^2}$.
Consequently, when $R_m\cup R_{m+1}$ is listed in increasing order, the element $\fl{k\varphi^m}$ has position $\fl{k\varphi}$ and the element $\fl{j\varphi^{m+1}}$ has position $\fl{j\varphi^2}$. In particular Conjecture~\ref{conj:strong} holds.
\end{enumerate}
\end{corollary}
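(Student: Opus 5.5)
Part (1) is Theorem~\ref{thm:pairs} with $i=m$, $j=m+1$: here $d=1$ is odd and divides $m$, so $R_m\cap R_{m+1}=\emptyset$. Taking $m=2n-1$ gives Conjecture~\ref{conj:disjoint}. For part (2) I would make the Skolem--Bang identity behind (1) explicit. The proof of Theorem~\ref{thm:pairs} with $d=1$ gives $r=F_{m+1}$ and $s=F_m$, so
\[
F_{m+1}\varphi^{-m}+F_m\varphi^{-m-1}=1,
\]
which also follows directly from $\varphi^{m+1}=F_{m+1}\varphi+F_m$. Put $\alpha=\varphi^m/F_{m+1}$ and $\beta=\varphi^{m+1}/F_m$. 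These are irrational and satisfy $1/\alpha+1/\beta=1$, so $B(\alpha)$ and $B(\beta)$ partition $\N$ by Beatty's theorem. Moreover $\fl{k\varphi^m}=\fl{(kF_{m+1})\alpha}$ and $\fl{j\varphi^{m+1}}=\fl{(jF_m)\beta}$.

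The key step is the ordering. Since $\alpha,\beta$ are complementary Beatty moduli, for $p,q\ge1$ we have
\[
\fl{p\alpha}<\fl{q\beta}\iff p\alpha<q\beta \iff p/\beta<q/\alpha .
\]
The strict inequality on floors follows from disjointness of the two sequences together with the irrationality of $\alpha/\beta$. With $p=kF_{m+1}$ and $q=jF_m$, the condition $\fl{k\varphi^m}<\fl{j\varphi^{m+1}}$ becomes $k\varphi^m<j\varphi^{m+1}$, i.e.\ $k<j\varphi$. Because $\varphi$ is irrational, the two floors are never equal, so floors compare exactly as the reals do. The same reasoning with $m=1$ shows $\fl{k\varphi}<\fl{j\varphi^2}\iff k<j\varphi$. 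So both inequalities are equivalent to $k<j\varphi$, which is the first claim of (2). I should double-check that the real inequality $k\varphi^m<j\varphi^{m+1}$ transfers to floors without a boundary issue, since both floors are integers. If $k\varphi^m<j\varphi^{m+1}$ then $\fl{k\varphi^m}\le\fl{j\varphi^{m+1}}$, and equality is impossible by (1). The converse is analogous. This argument holds for every $m$, so it needs no Beatty machinery beyond (1).

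For the positions, I would count the elements of $R_m\cup R_{m+1}$ below and including $\fl{k\varphi^m}$. The rows are disjoint and each is strictly increasing in its index, so this count is $k$ plus the number of $j\ge1$ with $j\varphi>k$ failing, i.e.\ with $j\varphi<k$. That number is $\fl{k/\varphi}$. The position is therefore $k+\fl{k/\varphi}=\fl{k(1+1/\varphi)}=\fl{k\varphi}$. Similarly the position of $\fl{j\varphi^{m+1}}$ is $j+\#\{k:k<j\varphi\}=j+\fl{j\varphi}=\fl{j\varphi^2}$. Hence ranking $R_{2n-1}\cup R_{2n}$ sends the two rows onto rows $1$ and $2$, which is Conjecture~\ref{conj:strong}.

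The only delicate point is keeping the counting in the position computation strict versus non-strict. Irrationality of $\varphi$ removes every tie, so I do not expect a real obstacle.
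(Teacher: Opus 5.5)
Your proposal is correct and follows the paper's proof essentially step for step. Part (1) is the case $d=1$ of Theorem~\ref{thm:pairs}. For part (2), both arguments get the floor comparison from ``$\le$ by monotonicity, equality of floors excluded by (1), equality of reals excluded by irrationality''. Both then reduce each comparison to $k<j\varphi$ and count positions as $k+\fl{k/\varphi}=\fl{k\varphi}$.

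The detour through the complementary moduli $\varphi^m/F_{m+1}$ and $\varphi^{m+1}/F_m$ is not needed; it is just the paper's parenthetical Russo--Schwiebert remark. Also, it is the disjointness in (1), not the irrationality of $\varphi$, that rules out equal floors, as your own later check correctly states.
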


\begin{proof}
(1) is the case $d=1$ of Theorem~\ref{thm:pairs}.
(Directly: $F_{m+1}/\varphi^m+F_m/\varphi^{m+1}=(F_{m+1}\varphi+F_m)/\varphi^{m+1}=1$ by~\eqref{eq:phipow}, so $B(\varphi^m/F_{m+1})$ and $B(\varphi^{m+1}/F_m)$ are complementary, which is Theorem~2.2 of Russo and Schwiebert~\cite{russo}; these two sets contain $R_m$ and $R_{m+1}$, respectively.)

(2) If $k\varphi^m<j\varphi^{m+1}$ then $\fl{k\varphi^m}\le\fl{j\varphi^{m+1}}$, and equality is impossible by (1).
The same holds with the inequality reversed, and $k\varphi^m\neq j\varphi^{m+1}$ since $\varphi$ is irrational.
Hence $\fl{k\varphi^m}<\fl{j\varphi^{m+1}}\iff k<j\varphi\iff k\varphi<j\varphi^2\iff\fl{k\varphi}<\fl{j\varphi^2}$, where the last step uses the $m=1$ case of the same argument.
Now the position of $\fl{k\varphi^m}$ in $R_m\cup R_{m+1}$ is
\[
k+\#\{j\ge1: j\varphi<k\}=k+\fl{k/\varphi}=\fl{k+k/\varphi}=\fl{k\varphi},
\]
using $\varphi=1+1/\varphi$.
Likewise the position of $\fl{j\varphi^{m+1}}$ is $j+\#\{k\ge1:k<j\varphi\}=j+\fl{j\varphi}=\fl{j\varphi^2}$.
\end{proof}

\subsection*{Other quadratic units}
Besides Theorem~\ref{thm:SB}, the argument above uses only~\eqref{eq:phipow}, the irrationality of $\varphi$, d'Ocagne's identity, the addition formula, strong divisibility and the inequality $F_g<F_d$ for $1\le g<d$, $d\ge3$, so it extends to other units.
Let $c\ge1$ be an integer, let $x=(c+\sqrt{c^2+4})/2$ be the root greater than $1$ of $x^2=cx+1$, and let $P_0=0$, $P_1=1$, $P_{n+1}=cP_n+P_{n-1}$.
Then $x$ is irrational (since $c^2+4$ is not a perfect square), $x^n=P_nx+P_{n-1}$, d'Ocagne's identity $P_mP_{n+1}-P_{m+1}P_n=(-1)^nP_{m-n}$ and the addition formula $P_{m+n}=P_mP_{n+1}+P_{m-1}P_n$ hold, and $(P_n)$ is a strong divisibility sequence with $P_g<P_d$ for $1\le g<d$, $d\ge3$ (see e.g.~\cite{ribenboim}).
The proof of Theorem~\ref{thm:pairs} then gives the following.

\begin{theorem}\label{thm:units}
With $x$ as above and $1\le i<j$, the sequences $B(x^i)$ and $B(x^j)$ are disjoint if and only if $j-i$ is odd and $(j-i)\mid i$.
In particular $B(x^m)\cap B(x^{m+1})=\emptyset$ for every $m\ge1$.
\end{theorem}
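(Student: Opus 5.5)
The plan is to repeat the proof of Theorem~\ref{thm:pairs} verbatim, with $\varphi$ replaced by $x$ and $F_n$ by $P_n$. First I will record the needed facts about $x$. Since $x^2=cx+1$, the other root is $\bar x=-1/x$, so $x$ is a unit of norm $-1$. Induction gives $x^n=P_nx+P_{n-1}$ for $n\ge1$ (with $P_0=0$). Because $x$ is irrational and $x^i,x^j>1$ are irrational (for example, $x^n=P_nx+P_{n-1}$ with $P_n\ge1$), Theorem~\ref{thm:SB} applies to $\alpha=x^i$ and $\beta=x^j$. It shows that $B(x^i)\cap B(x^j)=\emptyset$ if and only if there are positive integers $r,s$ with $rx^{-i}+sx^{-j}=1$. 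Multiplying by $x^j$ and setting $d=j-i$, this becomes $rx^d+s=x^j$.

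Next I expand both sides using $x^n=P_nx+P_{n-1}$ and use the linear independence of $1,x$ over $\mathbb{Q}$. This gives $rP_d=P_j$ and $rP_{d-1}+s=P_{j-1}$. So $r=P_j/P_d$, and d'Ocagne's identity for $(P_n)$ with $(m,n)=(j-1,d-1)$ gives
\[
s=\frac{P_dP_{j-1}-P_jP_{d-1}}{P_d}=(-1)^{d-1}\frac{P_i}{P_d}.
\]
Since all $P_n>0$ for $n\ge1$, positive integers $r,s$ exist if and only if $d$ is odd, $P_d\mid P_i$ and $P_d\mid P_j$. The addition formula $P_j=P_iP_{d+1}+P_{i-1}P_d$ shows that $P_d\mid P_i$ already implies $P_d\mid P_j$.

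The last step is to prove $P_d\mid P_i\iff d\mid i$. If $d\mid i$, strong divisibility gives $P_d\mid P_i$. Conversely, suppose $P_d\mid P_i$. Then $P_{\gcd(d,i)}=\gcd(P_d,P_i)=P_d$. When $d\ge3$, the inequality $P_g<P_d$ for $g<d$ forces $\gcd(d,i)=d$. The case $d=1$ is trivial. The case $d=2$ is irrelevant, since $d$ must be odd. The statement $B(x^m)\cap B(x^{m+1})=\emptyset$ is then the case $d=1$, and one can also check it directly via $P_{m+1}x^{-m}+P_mx^{-m-1}=1$.

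The only point needing care is the list of facts about $(P_n)$: d'Ocagne, the addition formula, strong divisibility and strict monotonicity. Strict monotonicity from index $2$ on (with $P_1=1\le P_2=c$) holds because $P_{n+1}=cP_n+P_{n-1}>P_n$ for $n\ge2$. When $c=1$ we have $P_1=P_2$, which is exactly why $d\ge3$ is needed. I would cite the standard references for the other facts rather than reprove them; otherwise the argument is routine.
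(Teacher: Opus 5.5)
Your proposal is correct and is essentially the paper's own proof: the paper obtains Theorem~\ref{thm:units} by rerunning the proof of Theorem~\ref{thm:pairs} with $\varphi,F_n$ replaced by $x,P_n$. It uses exactly the ingredients you list, namely Skolem--Bang, $x^n=P_nx+P_{n-1}$, d'Ocagne's identity, the addition formula, strong divisibility and $P_g<P_d$ for $1\le g<d$, $d\ge3$. Your explicit checks that $x^i$ and $x^j$ are irrational and that $P_{n+1}>P_n$ for $n\ge2$ fill in small details the paper leaves implicit.
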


For $c=2$ ($x=1+\sqrt2$) this describes the disjoint rows of Kimberling's array A399246.

\section{Occurrence counts: proof of Conjecture~\ref{conj:occ}}\label{sec:occ}

For $N\ge1$ let $\rows(N)=\{m\ge1:N\in R_m\}$, so $a(N)=\#\rows(N)$.
By Beatty's theorem $R_1\sqcup R_2=\N$, so $a(N)\ge1$.
By Corollary~\ref{cor:kimberling}, $\rows(N)$ contains no two consecutive integers.
By Lemma~\ref{lem:member}, $\rows(N)\subseteq\{m:\varphi^m<N+1\}$.
Hence $1\le a(N)\le\lceil \log_\varphi(N+1)/2\rceil$.

\begin{proposition}[Rows of odd-indexed Lucas numbers]\label{prop:lucas}
Let $m\ge3$ be odd. Then
\[
\rows(L_m)=\{\,i \text{ odd}: 1\le i\le (m-1)/2\,\}\cup\{m\},
\]
and hence $a(L_m)=\fl{(m+1)/4}+1$. In particular $a(L_{4v-5})=v$ for every $v\ge2$.
\end{proposition}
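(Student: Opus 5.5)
The plan is to apply the row criterion \eqref{eq:rowcrit} directly to $N=L_m$, computing $\fr{L_m\varphi^{-i}}$ in closed form for every $i$ by means of \eqref{eq:lucas} and \eqref{eq:lucas2}. Since $m$ is odd, \eqref{eq:lucas} gives $L_m=\varphi^m-\varphi^{-m}$, so
\[
L_m\varphi^{-i}=\varphi^{m-i}-\varphi^{-m-i}.
\]
For $i<m$ I will rewrite $\varphi^{m-i}$ using \eqref{eq:lucas2} as $L_{m-i}-(-1)^{m-i}\varphi^{i-m}$. The sign $(-1)^{m-i}$ depends only on the parity of $i$, so the argument splits into three cases: $i$ odd with $i<m$, $i$ even with $i<m$, and $i\ge m$.

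\emph{Odd $i<m$.} Here $m-i$ is even and $i\le m-2$, so
\[
L_m\varphi^{-i}=L_{m-i}-\bigl(\varphi^{i-m}+\varphi^{-m-i}\bigr).
\]
The subtracted quantity lies in $(0,1)$, since it is at most $\varphi^{-2}+\varphi^{-4}<1$. Hence $\fr{L_m\varphi^{-i}}=1-\varphi^{i-m}-\varphi^{-m-i}$. The criterion \eqref{eq:rowcrit} then becomes $\varphi^{i-m}+\varphi^{-m-i}<\varphi^{-i}$, which is equivalent to
\[
\varphi^{2i-m}+\varphi^{-m}<1 .
\]
If $i\le(m-1)/2$, then $2i-m\le-1$, and the left side is at most $\varphi^{-1}+\varphi^{-3}<1$. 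If instead $i\ge(m+1)/2$, then $2i-m\ge1$, so the left side exceeds $1$ and the criterion fails.

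\emph{Even $i<m$.} Here $m-i$ is odd, so
\[
L_m\varphi^{-i}=L_{m-i}+\varphi^{i-m}-\varphi^{-m-i},
\]
and the fractional part is $\varphi^{i-m}-\varphi^{-m-i}\in(0,1)$. Membership would require $\varphi^{i-m}-\varphi^{-m-i}+\varphi^{-i}>1$. But $\varphi^{i-m}\le\varphi^{-1}$ and $\varphi^{-i}\le\varphi^{-2}$, so the left side is strictly less than $\varphi^{-1}+\varphi^{-2}=1$. Thus no even row contains $L_m$.

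\emph{The case $i\ge m$.} For $i=m$ we have $L_m\varphi^{-m}=1-\varphi^{-2m}$, whose fractional part $1-\varphi^{-2m}$ exceeds $1-\varphi^{-m}$, so $m\in\rows(L_m)$. For $i\ge m+1$, I will use the second part of Lemma~\ref{lem:member}. It suffices to note that
\[
\varphi^{m+1}-(L_m+1)=\varphi^{m-1}+\varphi^{-m}-1>0,
\]
so $\varphi^i\ge L_m+1$ and $L_m\notin R_i$.

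Finally I count the odd $i$ in $[1,(m-1)/2]$. For odd $m$ this number is $\lceil (m-1)/4\rceil=\fl{(m+1)/4}$, and adding the row $m$ gives $a(L_m)=\fl{(m+1)/4}+1$. Substituting $m=4v-5$ gives $\fl{(4v-4)/4}+1=v$. There is no real obstacle in this argument. The only care needed is to confirm in each case that the small correction terms sum to less than $1$, so that the fractional part is correctly identified, and to handle the boundary case $i=(m\pm1)/2$.
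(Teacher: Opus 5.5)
Your proof is correct and follows the paper's argument essentially step for step. You use the same case split ($i=m$, $i>m$, even $i<m$, odd $i<m$), the same closed forms for $\fr{L_m\varphi^{-i}}$ via \eqref{eq:lucas2}, the same reduction to $\varphi^{2i-m}+\varphi^{-m}<1$, and the same count. The one small deviation is for $i\ge m+1$: you invoke the size part of Lemma~\ref{lem:member} via $\varphi^{m+1}-(L_m+1)=\varphi^{m-1}+\varphi^{-m}-1>0$, whereas the paper notes $0<L_m\varphi^{-i}<\varphi^{-1}<1-\varphi^{-i}$; both arguments are valid.
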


\begin{proof}
Since $m$ is odd, $L_m=\varphi^m-\varphi^{-m}$ by~\eqref{eq:lucas}, so $L_m\varphi^{-i}=\varphi^{m-i}-\varphi^{-m-i}$ for every $i\ge1$.
We apply~\eqref{eq:rowcrit} to each $i$.

\emph{Case $i=m$.} $L_m\varphi^{-m}=1-\varphi^{-2m}$, whose fractional part $1-\varphi^{-2m}$ exceeds $1-\varphi^{-m}$. So $m\in\rows(L_m)$.

\emph{Case $i>m$.} $0<L_m\varphi^{-i}<\varphi^{-1}$. Since $i\ge4$, we have $1-\varphi^{-i}>1-\varphi^{-2}=\varphi^{-1}$, so $i\notin\rows(L_m)$.

\emph{Case $i<m$, $i$ even.} Then $m-i$ is odd, and~\eqref{eq:lucas2} gives $\varphi^{m-i}=L_{m-i}+\varphi^{i-m}$. So $\fr{L_m\varphi^{-i}}=\varphi^{i-m}-\varphi^{-m-i}$, which lies in $(0,1)$ and is less than $\varphi^{-1}$. Since $1-\varphi^{-i}\ge 1-\varphi^{-2}=\varphi^{-1}$, we get $i\notin\rows(L_m)$.

\emph{Case $i<m$, $i$ odd.} Then $m-i\ge2$ is even, and $\varphi^{m-i}=L_{m-i}-\varphi^{i-m}$. So
\[
\fr{L_m\varphi^{-i}}=1-\varphi^{i-m}-\varphi^{-m-i},
\]
because $0<\varphi^{i-m}+\varphi^{-m-i}\le\varphi^{-2}+\varphi^{-4}<1$.
By~\eqref{eq:rowcrit}, $i\in\rows(L_m)$ if and only if $\varphi^{i-m}+\varphi^{-m-i}<\varphi^{-i}$, that is, $\varphi^{2i-m}+\varphi^{-m}<1$.
Here $2i-m$ is odd.
If $2i-m\le-1$, then $\varphi^{2i-m}+\varphi^{-m}\le\varphi^{-1}+\varphi^{-3}<1$.
If $2i-m\ge1$, then $\varphi^{2i-m}>1$.
So $i\in\rows(L_m)$ if and only if $i\le(m-1)/2$.

The number of odd $i$ with $1\le i\le (m-1)/2$ is $\fl{(m+1)/4}$.
For $m=4v-5$ this is $v-1$, so $a(L_{4v-5})=v$.
\end{proof}

For the rows $i\le(m-3)/2$, Proposition~\ref{prop:lucas} is the golden-ratio case $c=1$ of a theorem of Noppakaew, Kanwarunyu and Wanitchatchawan~\cite[Theorem~4.3]{nkw} on powers of the metallic means $(c+\sqrt{c^2+4})/2$ (their $k$ is our $c$): if $n\ge2i+2$, then $L_n\in R_i$ if and only if $n\equiv i\pmod 2$ (see also~\cite[Proposition~4.2]{nkw} for $i=1,2$).
The new part of Proposition~\ref{prop:lucas} is the treatment of the rows $i\ge(m-1)/2$, which is needed for the exact value of $a(L_m)$.

\begin{lemma}[Lucas shift]\label{lem:shift}
Let $N\ge1$ be such that $N+1\ne L_{2e}$ for every $e\ge1$.
Put $R=\fl{\log_\varphi(N+1)}+3$ and let $b$ be an even integer with
\begin{equation}\label{eq:bcond}
b\ge 2R\qquad\text{and}\qquad 6(N+1)\varphi^{2R-1-b}<1.
\end{equation}
Then $\rows(N+L_b)=\rows(N)$.
\end{lemma}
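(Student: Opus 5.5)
The plan is to compare, row by row, the fractional parts $\fr{N\varphi^{-m}}$ and $\fr{M\varphi^{-m}}$, where $M=N+L_b$, using the criterion~\eqref{eq:rowcrit}. Since $b$ is even, \eqref{eq:lucas} gives $L_b=\varphi^b+\varphi^{-b}$, so
\[
M\varphi^{-m}=N\varphi^{-m}+\varphi^{b-m}+\varphi^{-b-m}.
\]
For $m\le b$, \eqref{eq:lucas2} gives $\varphi^{b-m}=L_{b-m}-(-1)^{m}\varphi^{m-b}$. Hence, modulo $1$,
\[
M\varphi^{-m}\equiv N\varphi^{-m}-(-1)^m\varphi^{m-b}+\varphi^{-b-m},
\]
a perturbation of $N\varphi^{-m}$ of absolute size less than $2\varphi^{m-b}$. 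I would split the rows into $m\le R$, $R<m<b$, $m=b$ and $m>b$.

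\emph{Rows $m\le R$.} Whether $N\in R_m$ depends on the position of $N\varphi^{-m}$ relative to the two points $0$ and $1-\varphi^{-m}$ modulo $1$. It therefore suffices that the perturbation is smaller than the distance to each of these points.
\begin{itemize}
\item The distance to $1-\varphi^{-m}$ is $\dist{(N+1)\varphi^{-m}}$. By Lemma~\ref{lem:dioph} this exceeds $1/(3(N+1)F_m)\ge 1/(3(N+1)\varphi^{m-1})$.
\item The distance to $0$ is $\dist{N\varphi^{-m}}$, which is even larger.
\end{itemize}
So the membership status is unchanged provided $2\varphi^{m-b}<1/(3(N+1)\varphi^{m-1})$, i.e.\ $6(N+1)\varphi^{2m-1-b}<1$. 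For $m\le R$ this follows from~\eqref{eq:bcond}. Thus $m\in\rows(M)\iff m\in\rows(N)$ for these rows.

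\emph{Rows $m>R$.} Here $\varphi^m\ge\varphi^{R+1}>\varphi^3(N+1)$, so $N\notin R_m$ by Lemma~\ref{lem:member}, and $(N+1)\varphi^{-m}<\varphi^{-3}$. I must show $M\notin R_m$.
\begin{itemize}
\item For odd $m<b$, the representative above is $N\varphi^{-m}+\varphi^{m-b}+\varphi^{-b-m}\in(0,1)$. Since $m\le b-1$, we have $\varphi^{m-b}\le\varphi^{-1}$, and then $(N+1)\varphi^{-m}+\varphi^{m-b}+\varphi^{-b-m}<\varphi^{-3}+\varphi^{-1}+\varphi^{-2R}<1$. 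So the fractional part stays below $1-\varphi^{-m}$.
\item For $m=b$, $M\varphi^{-b}=1+N\varphi^{-b}+\varphi^{-2b}$ has a tiny fractional part, so $b\notin\rows(M)$.
\item For $m>b$, $M\varphi^{-m}<\varphi^{-1}+\varphi^{-3}+\text{tiny}<1-\varphi^{-m}$, and $M\varphi^{-m}\in(0,1)$. So $m\notin\rows(M)$.
\end{itemize}

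\emph{Even $m$ with $R<m<b$: the main obstacle.} Here the representative is $x=N\varphi^{-m}-\varphi^{m-b}+\varphi^{-b-m}$, which may be negative, so that the fractional part wraps to just below $1$.
\begin{itemize}
\item If $x\ge0$, then $x<(N+1)\varphi^{-m}<\varphi^{-1}<1-\varphi^{-m}$, so $m\notin\rows(M)$.
\item If $x<0$, then $\fr{M\varphi^{-m}}=1+x$, and this exceeds $1-\varphi^{-m}$ iff $x>-\varphi^{-m}$.
\end{itemize}
Multiplying the conditions $x<0$ and $x>-\varphi^{-m}$ by $\varphi^m$ shows that $m\in\rows(M)$ exactly when $N<\varphi^{2m-b}-\varphi^{-b}<N+1$.

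Write $2m-b=2e$, which is even. If $e\ge1$, \eqref{eq:lucas2} gives $\varphi^{2e}-\varphi^{-b}=L_{2e}-\varphi^{-2e}-\varphi^{-b}$, a number lying strictly between $L_{2e}-1$ and $L_{2e}$. Since $N$ is an integer, it lies in $(N,N+1)$ iff $N+1=L_{2e}$, which the hypothesis excludes. If $e\le0$, then $\varphi^{2e}-\varphi^{-b}<1\le N$, so the condition fails.

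Hence no row $m>R$ lies in $\rows(M)$, and combined with the rows $m\le R$ this gives $\rows(N+L_b)=\rows(N)$. The delicate points I expect to check carefully are the wrap-around in this even-$m$ case, which is exactly where $N+1\ne L_{2e}$ is needed, and the constant bookkeeping: confirming that the $R+3$ offset and the exponent $2R-1-b$ in~\eqref{eq:bcond} give enough room, including at $m=R$ and at the boundary values $m=b-1$ and $m=b+1$.
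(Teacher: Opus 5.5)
Your proposal is correct and follows the paper's proof essentially step for step: the perturbation $\delta_m$, Lemma~\ref{lem:dioph} at both arc endpoints for $m\le R$, the direct bounds for odd $m$ and for $m\ge b$, and the wrap-around window for even $m$ that forces $N+1=L_{2e}$; the only difference is that you treat $m=b$ separately instead of inside the even case. Two small points to tidy. First, it is the Lemma~\ref{lem:dioph} lower bound $1/(3NF_m)$ for $\dist{N\varphi^{-m}}$ that is ``even larger,'' not the distance itself. Second, in the even case you should note $x>-\varphi^{m-b}\ge-\varphi^{-2}>-1$ before writing $\fr{M\varphi^{-m}}=1+x$.
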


\begin{proof}
Write $M=N+L_b$.
Since $b$ is even, $L_b=\varphi^b+\varphi^{-b}$, so
\begin{equation}\label{eq:Mexp}
M\varphi^{-i}=N\varphi^{-i}+\varphi^{b-i}+\varphi^{-b-i}\qquad(i\ge1).
\end{equation}
For $1\le i\le b$, identity~\eqref{eq:lucas2} with $n=b-i$ and $(-1)^{b-i}=(-1)^i$ gives $\varphi^{b-i}=L_{b-i}-(-1)^i\varphi^{i-b}$. Hence
\begin{equation}\label{eq:delta}
M\varphi^{-i}\equiv N\varphi^{-i}+\delta_i\pmod 1,\qquad \delta_i:=-(-1)^i\varphi^{i-b}+\varphi^{-b-i},
\end{equation}
with $|\delta_i|<2\varphi^{i-b}$.
We compare $\rows(M)$ with $\rows(N)$ in three ranges of $i$.

\emph{(A) $1\le i\le R$.}
By~\eqref{eq:rowcrit}, $N\in R_i$ iff the point $N\varphi^{-i}$ of $\mathbb{R}/\mathbb{Z}$ lies in the open arc $(-\varphi^{-i},0)$, and $M\in R_i$ iff $N\varphi^{-i}+\delta_i$ does.
The distances from $N\varphi^{-i}$ to the endpoints $0$ and $-\varphi^{-i}$ of this arc are $\dist{N\varphi^{-i}}$ and $\dist{(N+1)\varphi^{-i}}$.
By Lemma~\ref{lem:dioph}, both exceed $1/(3(N+1)F_i)$.
Since $F_i\le\varphi^{i-1}$ and $i\le R$, condition~\eqref{eq:bcond} gives
\[
|\delta_i|<2\varphi^{i-b}\le\frac{1}{3(N+1)F_i}\cdot 6(N+1)\varphi^{2i-1-b}<\frac{1}{3(N+1)F_i}.
\]
So adding $\delta_i$ does not move the point across an endpoint of the arc, and $i\in\rows(M)\iff i\in\rows(N)$.

\emph{(B) $R<i\le b$.}
Here $\varphi^i\ge\varphi^{R+1}>\varphi^{\log_\varphi(N+1)+3}=\varphi^3(N+1)>N+1$, so $i\notin\rows(N)$, and moreover $(N+1)\varphi^{-i}<\varphi^{-3}$.
Put $y=N\varphi^{-i}+\delta_i$; by~\eqref{eq:delta}, $\fr{M\varphi^{-i}}=\fr{y}$.

If $i$ is odd then $i\le b-1$, so $0<y=N\varphi^{-i}+\varphi^{i-b}+\varphi^{-b-i}$ with $\varphi^{i-b}\le\varphi^{-1}$.
Adding $\varphi^{-i}$ gives $y+\varphi^{-i}\le(N+1)\varphi^{-i}+\varphi^{-1}+\varphi^{-b-i}<\varphi^{-3}+\varphi^{-1}+\varphi^{-4}=1$, so $\fr{y}=y<1-\varphi^{-i}$ and $i\notin\rows(M)$.

If $i$ is even then $y=N\varphi^{-i}-\varphi^{i-b}+\varphi^{-b-i}$.
If $y\ge0$, then $y+\varphi^{-i}\le (N+1)\varphi^{-i}+\varphi^{-b-i}<\varphi^{-3}+\varphi^{-4}<1$, so $i\notin\rows(M)$.
If $y<0$, then $y>-\varphi^{i-b}\ge-1$, so $\fr{y}=1+y$, so $i\in\rows(M)$ iff $y>-\varphi^{-i}$.
Multiplying the two conditions $y<0$ and $y>-\varphi^{-i}$ by $\varphi^i$ shows that $i\in\rows(M)$ iff
\begin{equation}\label{eq:window}
N+\varphi^{-b}<\varphi^{2i-b}<N+1+\varphi^{-b}.
\end{equation}
Put $2e=2i-b$, an even integer with $2e\le b$.
If $e\le0$, then $\varphi^{2e}\le1\le N$, so~\eqref{eq:window} fails.
If $e\ge1$, then $\varphi^{2e}=L_{2e}-\varphi^{-2e}$ by~\eqref{eq:lucas2}, and $\varphi^{-2e}\ge\varphi^{-b}$.
The left inequality of~\eqref{eq:window} gives $L_{2e}>N+\varphi^{-b}+\varphi^{-2e}>N$, so $L_{2e}\ge N+1$.
The right inequality gives $L_{2e}<N+1+\varphi^{-b}+\varphi^{-2e}\le N+1+2\varphi^{-2}<N+2$, so $L_{2e}\le N+1$.
Thus~\eqref{eq:window} forces $L_{2e}=N+1$, which is excluded by hypothesis.
So $i\notin\rows(M)$.

\emph{(C) $i>b$.}
By~\eqref{eq:Mexp}, $0<M\varphi^{-i}\le N\varphi^{-b-1}+\varphi^{-1}+\varphi^{-2b-1}$.
Since $b\ge2R\ge R+3$ we have $(N+1)\varphi^{-b-1}<\varphi^{-3}$, and hence $M\varphi^{-i}+\varphi^{-i}\le(N+1)\varphi^{-b-1}+\varphi^{-1}+\varphi^{-2b-1}<\varphi^{-3}+\varphi^{-1}+\varphi^{-4}=1$.
So $\fr{M\varphi^{-i}}=M\varphi^{-i}<1-\varphi^{-i}$ and $i\notin\rows(M)$.
Also $i\notin\rows(N)$, since $\varphi^i>\varphi^R>N+1$.

Combining (A), (B) and (C) gives $\rows(M)=\rows(N)$.
\end{proof}

\begin{remark}
The proof of case (B) shows what happens when the hypothesis on $N+1$ is dropped.
If $N+1=L_{2e}$ with $e\ge1$, then $\rows(N+L_b)=\rows(N)\cup\{b/2+e\}$ when $b/2+e$ is even, and $\rows(N+L_b)=\rows(N)$ otherwise.
We checked this numerically for all $N<1500$ and eight admissible values of $b$ each.
\end{remark}

\begin{theorem}\label{thm:occ}
For every $v\ge1$ there are infinitely many $N\ge1$ with $a(N)=v$. Hence Conjecture~\ref{conj:occ} is true.
\end{theorem}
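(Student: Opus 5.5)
The plan is to produce, for each $v\ge1$, one seed $N_0$ with $a(N_0)=v$ and $N_0+1$ not an even-indexed Lucas number. We then apply Lemma~\ref{lem:shift} to $N_0$ with infinitely many even $b$, which gives infinitely many distinct $N$ with the same row set.

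For $v\ge2$, take $N_0=L_{4v-5}$. By Proposition~\ref{prop:lucas}, $a(N_0)=v$. We must check that $L_{4v-5}+1\ne L_{2e}$ for every $e\ge1$. For $m=4v-5\ge3$ we have $L_m<L_m+1<L_{m+1}$, since $L_{m+1}-L_m=L_{m-1}\ge 3>1$. So $L_m+1$ is strictly between consecutive Lucas numbers and is not a Lucas number at all. For $v=1$, take $N_0$ with a single row, for instance $N_0=1$: we have $\rows(1)=\{1\}$, since $1=\fl{\varphi}$ and $\varphi^m\ge2$ for $m\ge2$. Also $N_0+1=2=L_0$, and $L_0$ is excluded from the hypothesis since $e\ge1$. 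Alternatively $N_0=2$ works: $\rows(2)=\{2\}$, because $\fl{\varphi^2}=2$, $2\notin R_1$, and $\varphi^3>3$. Here $3=L_2$ is forbidden, so $N_0=1$ is the cleaner choice; $N_0=4$ with $\rows(4)=\{1\}$ and $5$ not Lucas also works.

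Given such $N_0$, set $R=\fl{\log_\varphi(N_0+1)}+3$. The condition~\eqref{eq:bcond} holds for every sufficiently large even $b$, since $\varphi^{2R-1-b}\to0$. Lemma~\ref{lem:shift} then gives $\rows(N_0+L_b)=\rows(N_0)$, so $a(N_0+L_b)=v$ for all large even $b$. Because $L_b$ is strictly increasing in $b$ for $b\ge1$, these integers are pairwise distinct, and hence infinitely many $N$ have $a(N)=v$.

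There is essentially no obstacle here. The only delicate point is verifying the non-Lucas hypothesis on $N_0+1$, which is the strict-betweenness argument above. One could also note that iterating the shift is unnecessary, since a single seed with varying $b$ already suffices.
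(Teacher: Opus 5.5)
Your argument is correct and is the paper's proof: the same seeds $N=1$ for $v=1$ and $N=L_{4v-5}$ for $v\ge2$, the same check that $L_m<L_m+1<L_{m+1}$, and Lemma~\ref{lem:shift} applied with all sufficiently large even $b$. Delete the alternative seeds for $v=1$, though: $N_0=2$ is excluded by the lemma's hypothesis because $3=L_2$, and $N_0=4$ is simply wrong, since $4=L_3$ gives $\rows(4)=\{1,3\}$ by Proposition~\ref{prop:lucas}, so $a(4)=2$ (it is your own seed for $v=2$).
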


\begin{proof}
For $v=1$ take $N_1=1$.
Since $1=\fl{\varphi}$ and $\varphi^m\ge\varphi^2>2$ for $m\ge2$, Lemma~\ref{lem:member} gives $\rows(1)=\{1\}$.
Also $N_1+1=2=L_0$ is not of the form $L_{2e}$ with $e\ge1$.
For $v\ge2$ take $N_v=L_{4v-5}$.
Then $a(N_v)=v$ by Proposition~\ref{prop:lucas}.
Moreover $N_v+1$ is not a Lucas number: for $m\ge3$ we have $L_m<L_m+1<L_m+L_{m-1}=L_{m+1}$ because $L_{m-1}\ge3$.
In both cases Lemma~\ref{lem:shift} applies to $N_v$, and the numbers $N_v+L_b$, for all sufficiently large even $b$, form an infinite set of integers with exactly $v$ occurrences.
\end{proof}

For example, Lemma~\ref{lem:shift} guarantees $a(4+L_b)=2$ for even $b\ge20$ and $a(29+L_b)=3$ for even $b\ge30$. Direct computation shows that in fact $a(4+L_b)=2$ for all even $b\ge10$ and $a(29+L_b)=3$ for all even $b\ge22$ (checked for $b<120$).

\section{Densities, computations and an open question}\label{sec:numerics}

Using exact integer arithmetic ($\fl{k\varphi^m}=\bigl\lfloor\bigl(kL_m+\lfloor\sqrt{5k^2F_m^2}\rfloor\bigr)/2\bigr\rfloor$) we computed $a(N)$ for all $N\le10^8$.
Table~\ref{tab:dens} lists the proportion of $N\le 10^8$ with each value of $a(N)$, together with the least such $N$.

\begin{table}[h]
\centering
\begin{tabular}{rrrl}
\toprule
$v$ & $\#\{N\le10^8: a(N)=v\}$ & proportion & least $N$ with $a(N)=v$\\
\midrule
1 & 47\,265\,868 & 0.47266 & 1 $=L_1$\\
2 & 44\,629\,641 & 0.44630 & 4 $=L_3$\\
3 & 7\,269\,082 & 0.07269 & 29 $=L_7$\\
4 & 724\,952 & 0.00725 & 199 $=L_{11}$\\
5 & 94\,426 & 0.00094 & 1\,364 $=L_{15}$\\
6 & 13\,623 & $1.4\cdot10^{-4}$ & 9\,349 $=L_{19}$\\
7 & 2\,031 & $2.0\cdot10^{-5}$ & 64\,079 $=L_{23}$\\
8 & 314 & $3.1\cdot10^{-6}$ & 439\,204 $=L_{27}$\\
9 & 53 & $5.3\cdot10^{-7}$ & 3\,010\,349 $=L_{31}$\\
10 & 10 & $1.0\cdot10^{-7}$ & 20\,633\,239 $=L_{35}$\\
\bottomrule
\end{tabular}
\caption{Distribution of $a(N)$ for $N\le 10^8$.}\label{tab:dens}
\end{table}

These data suggest that each value of $a$ is taken on a set of positive density.
This follows from Weyl's equidistribution theorem~\cite{weyl} together with Lemma~\ref{lem:dioph}.
For $m\ge1$ let
\[
S_m=\{x\in[0,1):\fr{(-1)^{m+1}F_mx}>1-\varphi^{-m}\},
\]
a union of $F_m$ open intervals of total length $\varphi^{-m}$, which is invariant under $x\mapsto x+1/F_m$ modulo~$1$.
Let $A(x)=\#\{m\ge1:x\in S_m\}$ and $A_M(x)=\#\{m\le M:x\in S_m\}$, and let $\lambda$ denote Lebesgue measure.

\begin{proposition}\label{prop:density}
For every $v\ge1$, the set $\{N:a(N)=v\}$ has natural density $d_v=\lambda(\{x\in[0,1):A(x)=v\})$, and $d_v>0$.
\end{proposition}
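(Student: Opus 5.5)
The plan is to identify $a(N)$ with $A$ evaluated at $\fr{N\varphi}$, and then apply Weyl's theorem to a truncation $A_M$ while controlling the tail by a direct counting argument.

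\emph{Step 1: translation.} By~\eqref{eq:phineg}, $N\varphi^{-m}=(-1)^mN(F_{m+1}-F_m\varphi)\equiv(-1)^{m+1}F_mN\varphi\pmod 1$. So~\eqref{eq:rowcrit} says exactly that $N\in R_m\iff \fr{N\varphi}\in S_m$. Hence $a(N)=A(\fr{N\varphi})$ for every $N$, and the boundary cases never occur because $N\varphi^{-m}$ is irrational.

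\emph{Step 2: existence of the density.} Fix $M$. The set $\{A_M=v\}$ is a finite union of intervals. By Weyl's equidistribution theorem, the set of $N$ with $A_M(\fr{N\varphi})=v$ has density $\lambda(\{A_M=v\})$. The discrepancy between $a(N)$ and $A_M(\fr{N\varphi})$ can only come from rows $m>M$. We count directly: $\#\{N\le X: N\in R_m\}\le X\varphi^{-m}$, and only $m\le\log_\varphi(X+1)$ contribute by Lemma~\ref{lem:member}. So the number of $N\le X$ with $a(N)\ne A_M(\fr{N\varphi})$ is at most $X\sum_{m>M}\varphi^{-m}=O(X\varphi^{-M})$. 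Therefore the upper and lower densities of $\{N:a(N)=v\}$ both lie within $O(\varphi^{-M})$ of $\lambda(\{A_M=v\})$.

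Since $\sum_m\lambda(S_m)=\sum_m\varphi^{-m}<\infty$, Borel--Cantelli gives $A<\infty$ a.e. Also $A_M\to A$ pointwise and $\lambda(\{A_M\ne A\})\le\sum_{m>M}\varphi^{-m}$. Letting $M\to\infty$ shows that the density exists and equals $d_v$.

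\emph{Step 3: positivity, the main obstacle.} Take $x_0=\fr{N_v\varphi}$ with $N_v$ as in the proof of Theorem~\ref{thm:occ}, so that $A(x_0)=v$. The aim is an interval $I$ around $x_0$ of length $2\delta$ on which $\{A=v\}$ has measure at least $\delta$.

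For $m\le R:=\fl{\log_\varphi(N_v+1)}+3$, the point $x_0$ is not an endpoint of any component of $S_m$. So for $\delta$ small, $A_R$ is constant equal to $v$ on $I$, using that $\rows(N_v)\subseteq[1,R]$.

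For $m>R$, the computation in case (B) of Lemma~\ref{lem:shift} gives $F_mx_0\equiv\pm N_v\varphi^{-m}$ with $|N_v\varphi^{-m}|<\varphi^{-3}$. This point stays at distance at least an absolute constant $c>0$ from the arc $(1-\varphi^{-m},1)$. Hence $x\notin S_m$ whenever $|x-x_0|<c/F_m$, which covers every $m$ with $F_m\le c/\delta$.

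For the remaining $m$, with $F_m>c/\delta$, the set $S_m$ consists of $F_m$ intervals of length $\varphi^{-m}/F_m$ spaced $1/F_m$ apart. So $\lambda(S_m\cap I)\le(2\delta F_m+1)\varphi^{-m}/F_m$. Summing over $m$ with $F_m>c/\delta$ gives $O(\delta^2)$, since $\varphi^{-m}\asymp 1/F_m\lesssim\delta$ there. For $\delta$ small this is less than $\delta$, so $\lambda(\{A=v\}\cap I)\ge\delta>0$, which gives $d_v>0$.

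The delicate point is that the constant $c$ must be uniform in $m>R$. I would extract it from the explicit inequalities in case (B) of Lemma~\ref{lem:shift}, which give a margin of about $1-\varphi^{-1}-\varphi^{-3}$.
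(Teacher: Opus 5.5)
Your Steps~1 and~2 are correct and are essentially the paper's argument. The only slip there is harmless: $\#(R_m\cap[1,X])$ is bounded by $(X+1)\varphi^{-m}$, not $X\varphi^{-m}$.

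\textbf{The gap is in Step~3: the uniform margin $c$ does not exist.} On $\mathbb{R}/\mathbb{Z}$ the arc $(1-\varphi^{-m},1)$ is $(-\varphi^{-m},0)$. The point $(-1)^{m+1}F_mx_0\equiv N_v\varphi^{-m}$ sits just to the right of its endpoint $0$, at circular distance exactly $N_v\varphi^{-m}\to0$. The margin $1-\varphi^{-1}-\varphi^{-3}$ you cite from case~(B) controls only the far side. In case~(B) the side $y<0$ was handled by the Lucas-number argument, and that has no analogue for a continuous perturbation of $x$.

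So $x\notin S_m$ is guaranteed only when $F_m\lvert x-x_0\rvert\le N_v\varphi^{-m}$, i.e.\ only for $F_m\varphi^m\le N_v/\delta$, not for $F_m\le c/\delta$. Moreover, your $O(\delta^2)$ conclusion is false, not merely unproved. For even $m$, $x_0+t\in S_m$ whenever $N_v\varphi^{-m}<F_mt<(N_v+1)\varphi^{-m}$. Take the least even $m>R$ for which these $t$ lie in $(0,\delta)$. This gives $\lambda\bigl(I\cap\bigcup_{m>R}S_m\bigr)\gg\delta/(N_v+1)$ for all small $\delta$. The bad set is self-similar at $x_0$, so shrinking $\delta$ never makes its share of $I$ small.

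\textbf{What is needed is an explicit comparison of constants.} Let $N=N_v$ and let $m_0$ be the least $m>R$ with $F_m\delta>N\varphi^{-m}$. Every $m$ with $R<m<m_0$ is then harmless by the corrected criterion. For the rest, the periodicity bound gives
\[
\sum_{m\ge m_0}\lambda(I\cap S_m)\le\sum_{m\ge m_0}\Bigl(2\delta+\frac{1}{F_m}\Bigr)\varphi^{-m}\le 2\delta\varphi^{2-m_0}+\varphi^{3-2m_0}<\Bigl(2\varphi^{-3}+\frac{\varphi^2}{N}\Bigr)\delta,
\]
using $m_0\ge6$ and $N\varphi^{-m_0}<F_{m_0}\delta\le\varphi^{m_0-1}\delta$.

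This is below $\lambda(I)=2\delta$ only when $N\ge2$. That is why the paper takes $N=2$ (with $\rows(2)=\{2\}$) for $v=1$, rather than $N_1=1$ from Theorem~\ref{thm:occ}. For $N=1$ one has $2\varphi^{-3}+\varphi^2>2$, and the estimate fails. So besides correcting the threshold, you must change your base point for $v=1$, or find a genuinely sharper tail estimate.

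Your qualitative treatment of the rows $m\le R$ is fine. The paper simply makes $\delta$ explicit as $1/(3(N+1)F_R^2)$ via Lemma~\ref{lem:dioph}.
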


\begin{proof}
By~\eqref{eq:phineg}, $N\varphi^{-m}\equiv(-1)^{m+1}F_mN\varphi\pmod 1$.
So by~\eqref{eq:rowcrit}, $N\in R_m$ if and only if $x_N:=\fr{N\varphi}\in S_m$, and hence $a(N)=A(x_N)$.

\emph{Existence.} The set $\{x:A_M(x)=v\}$ is a finite union of intervals, so by Weyl's theorem the set $\{N:A_M(x_N)=v\}$ has density $\lambda(\{A_M=v\})$.
If $a(N)\ne A_M(x_N)$ then $N\in R_m$ for some $m>M$, and $\#(R_m\cap[1,X])\le(X+1)\varphi^{-m}$, so
\[
\#\{N\le X:a(N)\ne A_M(x_N)\}\le(X+1)\sum_{m>M}\varphi^{-m}=(X+1)\varphi^{1-M}.
\]
Similarly $\lambda(\{A_M\ne A\})\le\sum_{m>M}\lambda(S_m)=\varphi^{1-M}$.
Hence the upper and lower densities of $\{N:a(N)=v\}$ differ from $d_v$ by at most $2\varphi^{1-M}$, for every $M$.

\emph{Positivity.} Fix $N\ge2$ with $a(N)=v$: take $N=2$ if $v=1$ (here $\rows(2)=\{2\}$), and $N=L_{4v-5}$ if $v\ge2$ (Proposition~\ref{prop:lucas}).
Put $R=\fl{\log_\varphi(N+1)}+3$ and $\delta=1/(3(N+1)F_R^2)$, and let $W$ be the open arc of $\mathbb{R}/\mathbb{Z}$ of length $2\delta$ centred at $x_N$.
Let $x\in W$.
For $m\le R$, the point $(-1)^{m+1}F_mx$ differs from $N\varphi^{-m}$ modulo~$1$ by less than $F_m\delta\le1/(3(N+1)F_m)$, so, exactly as in step~(A) of the proof of Lemma~\ref{lem:shift}, $x\in S_m$ if and only if $x_N\in S_m$.
Since $\rows(N)\subseteq\{m:\varphi^m<N+1\}\subseteq[1,R]$, this gives $A_R(x)=a(N)=v$.
Now let $m>R$, so that $0<N\varphi^{-m}<(N+1)\varphi^{-m}<\varphi^{-3}$.
If $F_m\delta\le N\varphi^{-m}$, then modulo~$1$ the point $(-1)^{m+1}F_mx$ lies in $(0,2N\varphi^{-m})\subseteq(0,1-\varphi^{-m})$, so $x\notin S_m$.
Since $F_m\varphi^m$ increases with $m$, the other $m>R$ are exactly those with $m\ge m_0$, for some $m_0>R$, and for them the periodicity of $S_m$ gives $\lambda(W\cap S_m)\le(2\delta+1/F_m)\varphi^{-m}$.
Using $\varphi^{m-2}\le F_m\le\varphi^{m-1}$, $m_0\ge R+1\ge6$ and $N\varphi^{-m_0}<F_{m_0}\delta\le\varphi^{m_0-1}\delta$, we get
\[
\sum_{m\ge m_0}\lambda(W\cap S_m)\le2\delta\varphi^{2-m_0}+\varphi^{3-2m_0}<2\varphi^{-3}\delta+\varphi^{2}\delta/N.
\]
As $A=v$ on $W\setminus\bigcup_{m\ge m_0}S_m$, we obtain $d_v\ge(2-2\varphi^{-3}-\varphi^2/N)\delta>0$, because $N\ge2$ and $2\varphi^{-3}+\varphi^2/2<1.79$.
\end{proof}

Numerically, $d_1\approx0.47263$, $d_2\approx0.44632$, $d_3\approx0.07271$ and $d_4\approx0.00724$, close to the proportions in Table~\ref{tab:dens}.
The data also suggest the following, which we leave open.

\begin{conjecture}
For every $v\ge2$, the least $N$ with $a(N)=v$ is $L_{4v-5}$.
\end{conjecture}

By Proposition~\ref{prop:lucas}, $a(L_{4v-5})=v$, so the conjecture concerns only minimality. It holds for $v\le10$.

We also checked Theorem~\ref{thm:pairs} directly for all $1\le i<j\le12$ against the rows restricted to $[1,3\cdot10^6]$, and both parts of Corollary~\ref{cor:kimberling} for every pair of consecutive rows $(m,m+1)$ with $m\le21$, using the first $2\cdot10^4$ entries of each row.

\section*{Acknowledgements}
Clark Kimberling's OEIS entries A128440 and A358359 posed the questions answered here.
The author used an AI assistant (Claude, Anthropic) for literature searches, computations, and drafting, and checked the proofs and computations.

\end{document}